\def\ii{{\sqrt{-1}}}
\def\hX{{\hat{X}}}
\def\abl{{{\widetilde{w}}}}
\def\Abl{{{w}}}
\def\cK{{\mathcal{K}}}
\def\CC{{\mathbb C}}
\def\ZZ{{\mathbb Z}}
\def\NN{{\mathbb N}}
\def\RR{{\mathbb R}}
\def\cO{{\mathcal{O}}}
\def\JJ{{\mathcal J}}
\def\hGamma{{{\Gamma_\tau}}}
\def\fB{{\mathcal{B}}}
\newtheorem{theorem}{Theorem}[section]
\newtheorem{definition}[theorem]{Definition}
\newtheorem{proposition}[theorem]{Proposition}
\newtheorem{remark}[theorem]{Remark}
\newtheorem{lemma}[theorem]{Lemma}
\def\book#1{\rm{#1}, }
\def\paper#1{\textit{#1}, }
\def\jour#1{\rm{#1}, }
\def\yr#1{({\rm{#1}) }}
\def\vol#1{\textbf{#1}}
\def\pages#1{\rm{#1}}
\def\by#1{{\rm{#1}, }}
\begin{document}

\title{The Riemann constant for a non-symmetric 
Weierstrass semigroup}

\author{Jiryo Komeda, Shigeki Matsutani and Emma Previato}

%\date{Dec 16, 2015}

\maketitle

\begin{abstract}
The zero divisor of the
theta function of a compact Riemann surface $X$ of genus $g$
is the canonical theta divisor of Pic${}^{(g-1)}$ up to translation
by the Riemann constant $\Delta$ for a base point $P$ of $X$. 
The complement of the Weierstrass gaps at the base point $P$ 
given as a numerical semigroup plays an important role, which
is called the Weierstrass semigroup.
It is classically known that the Riemann constant $\Delta$ 
is  a half period $\frac{1}{2}\Gamma_\tau$
for the Jacobi variety $\JJ(X)=\CC^g/\Gamma_\tau$ of $X$
if and only if 
the Weierstrass semigroup at $P$ is symmetric.
In this article, we analyze the non-symmetric case.
Using a semi-canonical divisor $D_0$,
we show a relation between the Riemann constant $\Delta$
and a half period $\frac{1}{2}\Gamma_\tau$ of the non-symmetric case. 
We also identify the semi-canonical divisor $D_0$ for 
trigonal curves, and
remark on an algebraic expression for the Jacobi inversion problem
using the relation.
\end{abstract}

%\bigskip

%\subheading{PACS numbers}:
{\centerline{\textbf{2000 MSC: 
14H55, % Riemann surfaces; Weierstrass points; gap sequences [See also 30Fxx]
14H50, % Plane and space curves
14K25, % Theta functions [See also 14H42]
14H40 % Jacobians, Prym varieties [See also 32G20]
}}}

\section{Introduction}

The Riemann constant is an important invariant that is associated to a pointed
curve $(X,P)$, whose Abel map is normalized at $P$.
% in other words, the
%identification between $\mathrm{Pic}^{g-1}(X)$
%and $\mathrm{Pic}^{0}(X)=\mathrm{Jac}(X)$ is given by subtracting the divisor 
%$(g-1)P$. 
In this article we work over the complex numbers, assume that $X$ is a
compact Riemann surface of genus $g>1$ (which we call simply a curve),
 and use standard convention, in particular
identifying divisors under addition with line bundles under multiplication,
the Jacobian of $X$ with the complex torus $\JJ(X):=\CC^g/\Gamma_\tau,$ 
where
$\Gamma_\tau$ is the period lattice associated with the choice of a 
 standard homology basis
$\langle \alpha_i, \beta_i\rangle_{1\le i \le g}$
on $X$. We call the corresponding basis of
normalized holomorphic forms $\omega_i, {1\le i \le g}$,
we denote by $\Abl$   the Abel map,    by   $\kappa$
the natural projection $\CC^g \to \JJ(X)$, and by $\cK_X$
the canonical divisor of
$X$. 

Despite the choices involved, there is a valuable uniqueness to the Riemann
constant: it is the vector $\Delta\in
 \CC^g$ 
such that, for a given base point $P$,
$\displaystyle{\theta 
\left(\int_{g P}^{P_1+\cdots +P_{g}}\omega-\Delta\right)=0}$ if and
  only if $P_1+\cdots P_{g}-P$ is a positive divisor \cite[Th. 1.1]{fay};
 %Explicitly, the components of $\Delta$ are
%$\displaystyle{
%\Delta^P_i:=\frac{1}{2}\tau_{ii}+\Abl(Q_{\beta_i,0})-\sum_{j\neq
%i}^g\oint_{\alpha_j}\left(\omega_j(Q)\int_{P}^Q\omega_i\right)}$, 
%$i=1,\ldots,g$, for
%all $Q\in X$, where $Q_{\beta_i,0}$ is an initial point of 
%$\beta_i$; 
 moreover, 
$2\Delta +\Abl(\cK_X-(g-1)P)=0$ modulo $\Gamma_\tau$
 \cite{fay,Lew}. These facts connect the Riemann constant with the set of
 semi-canonical divisors, which in turn correspond to theta characteristics
in $\frac{1}{2}\mathbb{Z}^{2g}$.

%$\{0,\frac{1}{2}\}^{2g}$.

%An explicit expression for the Riemann constant is known for 
%certain cyclic curves
%\cite{mumford, enolskirichter, koikeweng}. As soon as $p>2$, however, there
%are non-cyclic $p$-gonal curves, for which classical results need to be
%adapted \cite{costahidalgo}. 
%In this note,
%we give an explicit Riemann constant for every non
%for all trigonal curves.

The complement 
of the Weierstrass gaps at $P$ is called the Weierstrass semigroup
if it is a numerical semigroup. We consider the pointed curve $(X,P)$
associated with a Weierstrass semigroup.

When the curve $X$ has  the property that 
a canonical divisor $\cK_X =(2g-2)P$, or
equivalently, the number $(2g-1)$ is (the last) Weierstrass gap,
the Weierstrass semigroup at $P$ is symmetric, i.e., an integer $n$
belongs to it if and only if $\ell_g-n$ does not, where
$1<\ell_1<\ell_2<\cdots <\ell_g$ are the Weierstrass gaps at $P$
\cite{stoehr}. This is also the case if and only if the Riemann constant is a
half period, 
%$\Delta\in\frac{1}{2}\ZZ^{2g}$:
$\Delta\in\frac{1}{2}\Gamma_\tau$:
 the parity and addition theorems for the theta function 
become simpler than in general \cite{fay}. It is only in this case that, to
 the best of our knowledge, the Riemann constant has been written explicitly;
 in the hyperelliptic case, it is $\kappa\Delta=\tau [\frac{1}{2},
 ...,\frac{1}{2}]+[\frac{g}{2},\ \frac{g-1}{2} ...,\frac{1}{2}]$ \cite[IIIa
 (5.4)]{mumford} and in the trigonal cyclic case of the Picard curve, it is
$\kappa\Delta=
\tau [0\ \frac{1}{2}\ 0]+[0\ \frac{1}{2}\ 0]$ \cite[Prop. I-2]{shiga}.

The Weierstrass semigroup at $P$ may fail to be symmetric, although
the problem of finding all numerical semigroups that can be realized on a
pointed curve is still open (cf. \cite{pinkham}). As far as we know,
the results in \cite{MK,KMP} about the Jacobi inversion problem are the only
ones given so far for the non-symmetric case. We were able to 
generalize results of Mumford's \cite{mumford} on 
hyperelliptic functions, in particular for the 
 semigroups  $\langle3,4,5\rangle$,
and $\langle3,7,8\rangle$.

In this article, we state the algebraic-transcendental correspondence
for the Riemann constant on a general pointed curve and its consequences for
the Jacobi inversion problem. 
In the recent monograph \cite{FZ}, the authors analyze the Riemann constant
as well as we do, for $Z_n$ curves, which have a total
ramification at $P$; their goal is to derive (explicitly in some cases) an
analog of  the classical Thomae
formula, which expresses algebraically the branch points of the curve in terms
of theta characteristics and thetanulls. Our motivation instead is
to use the 
the {\lq\lq}shifted Riemann constant{\rq\rq} $\Delta_s$, 
cf. Definition \ref{def:SRC},
and {\lq\lq}shifted Abel map{\rq\rq} $\Abl_s$, cf. Definition \ref{def:SAM},
to make explicit the correspondence between the group structure 
 of the Jacobian and linear equivalence of divisors on the curve, a 
crucial issue when evaluating the sigma function (associated to the theta
function) on symmetric products of the curve, cf. Section \ref{last}.
More precisely
we introduce a divisor $\fB$ related to a semi-canonical divisor $D_0$
in Lemma \ref{lemm:a1}, and using $\fB$, we define
the shifted Riemann constant $\Delta_s$ and
and shifted Abel map $\Abl_s$ to connect the shifted
Riemann constant $\Delta_s$ with a half period $\frac{1}{2}\Gamma_\tau$.
The main results are given in 
Theorems \ref{thm:01}, \ref{thm:0} and \ref{thm:1}.
The case when $P$ is a trigonal point is made explicit in Proposition
\ref{trigonal}. As in Remark \ref{rmk:JIF} and (\ref{eq:theta}),
we show that
they play crucial roles in the Jacobi inversion problem.

%this result to all monoidal curves \cite{herzog, pinkham}, namely,
%a local parametrization of the curve at $P$ by functions, defined
%on natural patches, which are rational on a set of singular curves.  
%By
%choosing suitable points on those curves and `lifting' them to $X$, we
%construct  the divisor $B$,
%and the attendant theta characteristics, namely the divisors whose square is

%We focus our attention on the divisor $\fB$ 
%which plays the role of the Riemann constant for the base point $P$,
%in the sense that  $B+P$ is a canonical divisor.

After setting up notation and standard facts in Section \ref{notation},
 we explain our
construction of the non-symmetric examples in Section 3 and interpret them in
the classical setting of theta characteristics in Section \ref{last}.
\bigskip

\noindent
{\bf{Acknowledgments: }}
One of the authors (S.M.) thanks Atsushi Nakayashiki for 
pointing out this problem for 
\cite{KMP,MK}, and Yoshihiro \^Onishi for critical discussions.

\bigskip
\section{Notation and review}\label{notation}
\subsection{Abel map and theta functions}

Let us consider a compact Riemann surface $X$ of genus $g$
and its Jacobian $\JJ(X):={\CC}^g/\Gamma_\tau$ where
$\Gamma_\tau:={\ZZ}^g +\tau {\ZZ}^g$.
Let $\hX$ be an Abelian covering of $X$ ($\varpi: \hX \to X$).
Since the covering space $\hX$ is constructed by a quotient space
of path space (contour of integral),
we consider an embedding $X$ into $\hX$ by a map $\iota: X \to \hX$
such that $\varpi\circ \iota = id$.
For a point $P\in \hX $, we define the Abel map $\Abl$
and $\abl$
$$
\Abl: S^k \hX \to \CC^g, \quad \Abl(P_1, \cdots, P_k) =
 \sum_{i=1}^k \Abl(P_i) =
 \sum_{i=1}^k \int^{P_i}_P \omega, \quad
\abl:=\kappa \Abl \iota: S^k X \to \JJ(X),
$$
where $S^k \hX$ and $S^k X$ are
$k$-symmetric products of $\hX$ and $X$ respectively.
The Abel theorem shows $\kappa \Abl = \abl \varpi$.
We fix $\iota$ and $\varpi P$ is also denoted by $P$.

Later we will fix the point $P \in X$ as a marked point $P$ of $(X,P)$.

The map $\abl$ embeds $X$ into $\JJ(X)$
and generalizes to a map from the space of divisors of $X$
into $\JJ(X)$ as $\abl(\sum_i n_i P_i):=\sum_in_i\abl(P_i)$, $P_i\in X$,
$n_i\in\ZZ$. Similarly we will define $\Abl(\iota D)$ for 
a divisor $D$ of $X$. (In Introduction, we omitted $\iota$ and
wrote $\Abl(D)$ rather than $\Abl(\iota D)$.)

The Riemann theta function, analytic in both variables $z$ and $\tau$, is
defined by:
$$
\theta(z,\tau ) =
\sum_{n \in \ZZ^{g}} 
\exp\left( 2\pi\ii ({}^t n z + \frac{1}{2}{}^t n \tau n)\right)
.
$$ 
 The zero-divisor of $\theta$ modulo $\Gamma_\tau$ 
is denoted by  $\Theta:=\kappa \mathrm{div}(\theta) \subset \JJ(X)$.

The theta function with characteristics $\delta', \delta''\in\RR^{g}$
is defined as:
\begin{equation}
\theta \left[\begin{matrix}\delta'\\ \delta''\end{matrix}
\right] (z, \tau )
   =
   \sum_{n \in \ZZ^g} \exp \big[\pi \sqrt{-1}\big\{
    \ ^t\negthinspace (n+\delta')
      \tau(n+\delta')
   + 2\ {}^t\negthinspace (n+\delta')
      (z+\delta'')\big\}\big].
\label{eq:2.1}
\end{equation}

If $\delta = (\delta',\delta'')\in 
  \{0, \frac{1}{2}\}^{2g}$, then $\theta
\left[\delta\right]\left(z,\tau\right):=\theta
\left[^{\delta'}_{\delta''}\right]\left(z,\tau\right)$ has definite
parity in $z$, $\theta \left[\delta\right]
\left(-z,\tau\right)=e(\delta) \theta \left[\delta\right]
\left(z,\tau\right)$, where $e(\delta):=e^{4\pi \imath{\delta'}^t
\delta''}$. There are $2^{2g}$ different characteristics of definite
parity.

%By Abel's Theorem, each one of such characteristics determines
%the divisor class of a spin bundle $\Gamma_\delta\simeq K^{1\over2}_X$,
%so that we may call them spin structures. There are $2^{g-1}(2^g+1)$
%even and $2^{g-1}(2^g-1)$ odd spin structures.

\subsection{Numerical  semigroups}

A numerical semigroup $H=\langle M\rangle$ generated by a set $M$,
has gap sequence $L(H):=\NN_0\setminus H$ of $H$
and a finite number 
$g(H)$, called ``genus'', of elements in $L(H)$.
For example,
$$
L(\langle 3,4,5\rangle)=\{1, 2\}, \ \ \ 
L(\langle 3,7,8\rangle)=\{1, 2, 4, 5\}. \ \ \ 
$$
For a gap sequence $L:=\{\ell_0 < \ell_1 < \cdots < \ell_{g-1} \}$ of 
genus $g$, let $M(L)$ be the minimal set of generators for the 
semigroup $H(L)$ and 
\begin{equation}
\alpha(L) :=\{\alpha_0(L), \alpha_1(L), \ldots, \alpha_{g-1}(L)\}
\label{eq:alphaL}
\end{equation}
where $\alpha_i := \ell_i - i -1$.  When an $\alpha_i$
is repeated $j>1$ times we write $\alpha_i^j$ in $\alpha(L)$.
We let: $\lambda_i(L):=\alpha_{g-i}(L)+1$ and associate to $L$ the Young
diagram 
$\Lambda(L) =(\lambda_1(L), \ldots, \lambda_g(L))$.

We say that $H$ is symmetric when it has a property that
an integer $n$ belongs to it if and only if $\ell_g-n$ does not.
 Therefore, $H$ is symmetric if and only if
$\Lambda(L(M))$ is self-dual; in this case, the Young diagram 
is equal to  its transpose. 
It is known that $H$ is symmetric if and only if
$2g(H)-1$ is a gap of $H$.

We let $a_{min}(L)$ be the smallest positive integer of $M(L)$.
We call a semigroup $H$ an $a_{min}(L)$-semigroup; for example,
$\langle3,4,5\rangle$ and $\langle3,7,8\rangle$ are $3$-semigroups.

\medskip
For a 
 curve $X$ of genus $g$
 and a point $P \in X$,  the semigroup 
$$ 
 H(X,P):= \{n \in \NN_0\ |\ \mbox{there exists } f \in k(X)
                          \mbox{ such that } (f)_\infty = n P\ \},
$$
 called the Weierstrass semigroup of the point $P$, 
is a numerical semigroup and $g=g(H(X,P))$.
If $L(H(X,P)):=\NN_0\setminus H(X,P)$ differs from the set 
$\{1, 2, \cdots, g\}$, 
we say that $P$ is a Weierstrass point of $X$.
%A non-singular curve $X$ is said to be a Weierstrass-Hurwitz curve
%if there exists a pointed  curve $P \in X$ such that 
%$H=H(X,P)$ is the Weierstrass semigroup. 
 We also say that the pointed curve  $(X,P)$ is symmetric (non-symmetric) 
if such is the Weierstrass semigroup at $P$.

\section{Canonical divisor of a pointed curve}

In this section we give a lemma on the canonical divisors 
of a pointed curve $(X,P)$ with 
 Weierstrass semigroup $H(X,P)$, including non-symmetric ones.
The fact is classical, cf., e.g., \cite[Th. A-3]{shiga},
\cite[Definition 3.9, p.163]{mumford0}.

\begin{lemma}\label{lemm:a1}
There is a divisor $\fB$ of degree 
$d_0:=\mathrm{deg}(\fB)$ such that
\begin{gather*}
\cK_X=2D_0
= 2(g-1+d_0)P-2\fB, 
\end{gather*}
where we indicate linear equivalence by an equal sign.
\end{lemma}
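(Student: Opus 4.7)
The plan is to build $D_0$ and $\fB$ by first choosing a theta characteristic on $X$ and then expressing it as a shift by a multiple of $P$ modulo an effective divisor. Since $X$ has genus $g>1$, the Jacobian $\JJ(X)$ contains $2^{2g}$ elements of order dividing $2$, so there exists a divisor $D_0$ of degree $g-1$ with $2D_0\sim\cK_X$; fix such a semi-canonical $D_0$. The lemma then reduces to producing an effective divisor $\fB$ of some degree $d_0\geq 0$ such that $D_0\sim(g-1+d_0)P-\fB$, since doubling that equivalence yields the stated identity $\cK_X\sim 2(g-1+d_0)P-2\fB$.

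This reformulation is in turn equivalent to requiring that the linear system $|(g-1+d_0)P-D_0|$ be non-empty for some $d_0\geq 0$. The divisor $(g-1+d_0)P-D_0$ has degree $d_0$, so the Riemann--Roch theorem yields
\begin{equation*}
h^0\bigl((g-1+d_0)P-D_0\bigr)\geq d_0-g+1,
\end{equation*}
which is strictly positive whenever $d_0\geq g$. I would then pick the smallest $d_0\geq 0$ for which this linear system contains an effective divisor and take $\fB$ to be one such representative. By construction $D_0+\fB\sim (g-1+d_0)P$, and doubling gives $\cK_X\sim 2D_0\sim 2(g-1+d_0)P-2\fB$, as required.

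As a sanity check, in the symmetric case where $\cK_X\sim(2g-2)P$ one may take $D_0=(g-1)P$, giving $\fB=0$ and $d_0=0$; in the non-symmetric case $d_0$ must be strictly positive, reflecting the failure of $\cK_X-(2g-2)P$ to vanish in $\JJ(X)$. The only non-trivial ingredient is the classical existence of a theta characteristic on a compact Riemann surface of genus $g>1$; the rest is a direct Riemann--Roch argument, so no serious obstacle arises. The choice of $D_0$ among the $2^{2g}$ theta characteristics (and consequently of $\fB$) is not canonical at this stage, and I would expect the authors to pin down a preferred representative later in the paper in order to make the shifted Riemann constant $\Delta_s$ and shifted Abel map $\Abl_s$ depend explicitly on the pointed curve $(X,P)$.
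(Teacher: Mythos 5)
Your proof is correct and follows essentially the same route as the paper: fix a semi-canonical divisor $D_0$ of degree $g-1$ with $2D_0\sim\cK_X$ and write it as $(g-1+d_0)P-\fB$. The only difference is that the paper simply takes $\fB$ to be the formal difference $(g-1+d_0)P-D_0$ without addressing effectivity, whereas you add a Riemann--Roch step to choose $\fB$ effective once $d_0\geq g$ --- a harmless strengthening that the lemma as stated does not actually demand, though it matches how $\fB$ is used later.
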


\begin{proof}
By the surjectivity of the Abel map,
there exists a divisor $D_0$ such that
$\cK_X=2 D_0$. ($D_0$ is called the semi-canonical divisor.)
Since the degree of $\cK_X$ is $2g-2$,
there is a divisor $\fB$ such that
$D_0=(g-1+d_0)P-\fB$.
\end{proof}

\begin{remark}\label{rmk:W-curve}
{\rm{
\begin{enumerate}

\item As we show later, the divisor $\fB$ plays much more important
roles than the semi-canonical divisor $D_0$ in this article,
though $\fB$ is defined from $D_0$. 

\item
If  $H(X,P)$ is symmetric, $\fB$ can be taken to be the zero divisor
 and $d_0=0$, therefore
Lemma \ref{lemm:a1} includes the symmetric
Weierstrass semigroup at $P$ as a special case.

\item
A pointed curve hyperelliptic at $P$ (namely, $H(X,P)$ contains the number
 2), is symmetric.
If the Weierstrass semigroup of the curve is generated by
two numbers $\langle r,s\rangle$, the semigroup is  symmetric. 
For example, a trigonal  non-singular plane curve 
$y^3 =x^r+1$ is a  $\langle 3, r\rangle$-type symmetric
curve (at $P=\infty$). 
Since a trigonal curve with total ramification at $P$ has semigroup
generated by either two or three elements, this remark and Proposition
\ref{trigonal} cover all trigonal cases with total ramification.

\item
The semi-canonical divisor $D_0$ i.e., $2D_0=\cK_X$ is sometimes called 
{\it{theta characteristics}} \cite[Definition 3.9, p.163]{mumford0}.
Instead of this terminology, 
we refer to the vector $\delta$ in (\ref{eq:2.1}) as the
{\it{theta characteristics}}.

\end{enumerate}
}}
\end{remark}

\begin{proposition}\label{trigonal}
For a pointed curve $(X,P)$ whose Weierstrass semigroup at $P$ is of type
$\langle 3,2r+s,2s+r\rangle$ for natural numbers $(r,s)\neq 1$, $r>s$
we have the following results:

\begin{enumerate}
\item 
$(X,P)$ is not symmetric.

\item 
The genus of $X$ is $g=r+s-1$;  and

\item 
the divisor $\fB$ in Lemma \ref{lemm:a1} can be written explicitly as
$\fB=B_{s+1}+\cdots+B_{s+r}$, with
$$
\cK_X=(2g-2)P+B_1+\cdots+B_{s}-s P,$$
where $B_1, \ldots, B_{r+s}$ are ramification points 
corresponding to the branch points of a singular curve,
$$
f_0(x,y)= y^3-(x-b_1)\cdots(x-b_s)\cdot
 (x-b_{s+1})^2\cdots(x-b_{s+r})^2=0,
$$
with $P=\infty$,
$$
B_1+\cdots+B_{s}+2(B_{s+1}+\cdots+B_{s+r})-(s+2r)P\sim 0,
$$
but
$$
B_{s+1}+\cdots+B_{s+r}- rP\not\sim 0.
$$
\end{enumerate}
\end{proposition}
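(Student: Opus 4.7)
The plan is to handle (1) and (2) by a direct semigroup analysis and (3) by working on the explicit normalization of the singular plane model. For (2), I would partition $H=\langle 3,2r+s,2s+r\rangle$ by residues modulo $3$. The condition $r\not\equiv s\pmod{3}$ (needed so that $\gcd(3,2r+s,2s+r)=1$ and $H$ is really a numerical semigroup) places $2r+s$ and $2s+r$ in the two distinct nonzero residue classes, each being the smallest positive representative of its class, so counting gaps class by class gives $(2r+s-1)/3+(2s+r-2)/3=r+s-1$, hence $g=r+s-1$. For (1), the Frobenius number is $\ell_g=\max(2r+s,2s+r)-3=2r+s-3$ (since $r>s$); symmetry would require $\ell_g=2g-1=2r+2s-3$, forcing $s=0$ and contradicting $s\ge 1$ (which is needed for three minimal generators).

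For (3), I would first construct the normalization $X$ of $y^3=p(x)q(x)^2$ with $p=\prod_{i=1}^s(x-b_i)$ and $q=\prod_{j=1}^r(x-b_{s+j})$ by introducing $w:=y^2/q$, which satisfies $w^3=p^2q$ and $yw=pq$. A local analysis at each $b_i$ (writing $x-b_i=t^3\cdot(\mathrm{unit})$ with $y$ or $w$ as a local uniformizer) shows that $X$ is smooth and that the projection $\pi\colon X\to\PP^1$ given by $x$ is totally ramified over $b_1,\ldots,b_{s+r}$ and over $\infty$, with unique preimages $B_1,\ldots,B_{s+r}$ and $P$ respectively; moreover the pole orders of $x,y,w$ at $P$ are $3,2r+s,2s+r$, confirming the claimed semigroup. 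Riemann--Hurwitz then gives $\cK_X=\pi^*\cK_{\PP^1}+R=-6P+2(\fA+\fB+P)=-4P+2\fA+2\fB$, where $\fA:=B_1+\cdots+B_s$ and $\fB:=B_{s+1}+\cdots+B_{s+r}$; the same local analysis reads off $(y)=\fA+2\fB-(2r+s)P$ and $(w)=2\fA+\fB-(2s+r)P$, which gives the two linear equivalences. Substituting $\fA+2\fB\sim(2r+s)P$ into the expression for $\cK_X$ produces $\cK_X\sim(2g-2)P+\fA-sP$.

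The main obstacle is the non-equivalence $\fB\not\sim rP$. Suppose $\fB\sim rP$; then some $f\in k(X)$ satisfies $(f)=\fB-rP$, so $f$ has pole order exactly $r$ at $P$ and $r\in H$. Since the only representation of $r$ as a non-negative integer combination of $3,2r+s,2s+r$ uses $3$ alone, this forces $3\mid r$. The non-gaps of $H$ up to $r$ are then $\{0,3,\ldots,r\}$, so $L(rP)$ is spanned by $1,x,\ldots,x^{r/3}$ and $f=c\prod_i(x-\gamma_i)$. Its zero divisor $(f)_0=\sum_i\pi^{-1}(\gamma_i)$ is a disjoint union of complete $\pi$-fibers, each being either three distinct unramified points or a single triple-ramified point $3B_j$, and no such sum can equal $\fB=B_{s+1}+\cdots+B_{s+r}$, which is a multiplicity-free divisor of $r$ distinct ramification points; this is the required contradiction.
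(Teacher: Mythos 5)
Your proposal is correct, and it is in fact more complete than the paper's own proof, which only appeals to the special cases $\langle3,4,5\rangle$ and $\langle3,7,8\rangle$ of \cite{MK,KMP}, asserts that ``the result holds in general,'' writes down the normalized model $w^3=p^2q$ with the ring $R=\CC[x,y,w]/(h_1,h_2,h_3)$, and records a single differential divisor. Your route differs usefully in three places. First, you prove (1) and (2) by direct semigroup combinatorics (gap count in the two nonzero residue classes mod $3$, and comparison of the Frobenius number $2r+s-3$ with $2g-1$), making explicit the needed hypothesis $r\not\equiv s \pmod 3$ which the paper's condition ``$(r,s)\neq 1$'' presumably intends; the paper treats these as known facts. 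Second, writing $\fA:=B_1+\cdots+B_s$, you get the canonical divisor from Riemann--Hurwitz for the degree-$3$ map $x:X\to\PP^1$ together with $(y)=\fA+2\fB-(2r+s)P$ and $(w)=2\fA+\fB-(2s+r)P$; this reproduces the stated formula $\cK_X\sim (2g-2)P+\fA-sP$ and the equivalence $\fA+2\fB\sim(s+2r)P$, and also quietly corrects a slip in the paper: one computes $\left(\frac{1}{w}dx\right)=\fB+(2s+r-4)P$, so the paper's displayed identity is that of $\left(\frac{1}{y}dx\right)=\fA+(2g-2-s)P$. (It would be worth adding the one-line conclusion $\cK_X\sim 2\bigl((g-1+r)P-\fB\bigr)$, which is what identifies $\fB$ with the divisor of Lemma \ref{lemm:a1} with $d_0=r$.) Third, and most importantly, you actually prove $\fB-rP\not\sim 0$: if $\fB\sim rP$ then $r\in H(X,P)$ forces $3\mid r$, the corresponding function lies in the span of $1,x,\ldots,x^{r/3}$, and its zero divisor is a sum of complete fibers of $x$, which can never equal the reduced divisor $\fB$ supported on ramification points; the paper's proof never addresses this non-equivalence. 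One shared caveat: like the paper, you verify the claims only for curves obtained by normalizing $y^3=pq^2$, and neither argument shows that every pointed curve with Weierstrass semigroup $\langle 3,2r+s,2s+r\rangle$ admits such a cyclic plane model (for $\langle3,4,5\rangle$ this semigroup occurs at any non-Weierstrass point of any genus-$2$ curve, most of which are not of this form), so the proposition must be read, as the paper does implicitly, as a statement about the curves attached to $f_0$; on this point your proof is on the same footing as the paper's.
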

\begin{proof}
As indicated in \cite{MK, KMP}, where the result was proven for
the cases
$\langle3,4,5\rangle$ and $\langle3,7,8\rangle$  corresponding to
$(r=1,s=2)$ and $(r=2,s=3)$ respectively, the result holds in general.
More precisely, by normalizing the singular curve, we also have \cite{MK},
$$
f_1(x,y)= w^3-(x-b_1)^2\cdots(x-b_s)^2\cdot
 (x-b_{s+1})\cdots(x-b_{s+r})=0.
$$
In other words, we have 
the commutative ring $R=\CC[x,y,w]/(h_1, h_2, h_3)$ of the curve $(X,P)$,
$R = \cO_X(*P)$, 
where
\begin{gather*}
\begin{split}
h_1(x,y) &= w^2 - (x-b_1)\cdots(x-b_s)y =0,\\
h_2(x,y) &= y^2 - (x-b_{s+1})\cdots(x-b_{s+r})w =0,\\
h_3(x,y) &= wy - (x-b_{1})\cdots(x-b_{s+r}) =0,\\
\end{split}
\end{gather*}
By simple computations, we have
$$
\left(\frac{1}{w}dx\right)=(B_1+\cdots+B_{s})+(2g-2-s)P.
$$
\end{proof}

\begin{remark}\label{rmk:trigonal}
{\rm{
We  give some comments on 
the pointed curve $(X,P)$ of type $\langle 3,2r+s,2s+r\rangle$
treated in Proposition \ref{trigonal}.
The curve $\langle3,7,8\rangle$ and the related cyclic singular
curves are also studied in \cite[p. 83]{FZ} (without
consideration of the Weierstrass  semigroup),
for the generalizations of Thomae's formula.
In this article, we consider these singular curves 
for the Jacobi inversion formulae, so we focus on the affine ring
 $\mathrm{Spec} R$ by  normalizing these curves.
This allows us to use linear-equivalence relations, such as
$3B_i \sim 0$ and
\begin{gather*}
\begin{split}
B_1+\cdots+B_{s}+B_{s+1}+\cdots+B_{s+r}-(s+r)P
&\sim 
-(B_{s+1}+\cdots+B_{s+r}-rP)\\
&\sim 
2(B_{s+1}+\cdots+B_{s+r}-rP).\\
\end{split}
\end{gather*}
By letting $\fB_1:=B_1+\cdots+B_{r+s}$, $\fB_1 - (s+r)P \sim 2(\fB-rP)$.
Further we have
$$
\left(\frac{1}{wy}dx\right)=-\fB_1 +(2g-2+r+s)P.
$$
We define $R^B:=\{ f \in R \ | \ \exists \ell, \ 
\mbox{such that}\ (f)- \fB_1 +\ell P >0\}$, so that 
$y$ and $w$ belong to $R^B$.
We choose a graded (by order of pole) basis of 
$R^B = \oplus_{i=0} \CC f_i$ as a $\CC$-vector space.
Then the holomorphic one-forms are explicitly written as
$$
\frac{f_i}{wy}dx \quad i = 0, \cdots, g-1.
$$
By Riemann-Roch theorem, the degree of $f_{g-2}$ at $P$ is
$(2g-2)+(r+s)$.

We display the examples of $R$ and $R^B$:

\begin{gather*}
{\tiny{
\centerline{
\vbox{
	\baselineskip =10pt
	\tabskip = 1em
	\halign{&\hfil#\hfil \cr
        \multispan7 \hfil Table 1: Examples of $R$ \hfil \cr
	\noalign{\smallskip}
	\noalign{\hrule height0.8pt}
	\noalign{\smallskip}
$(r,s) $ \strut\vrule  &
$g \backslash$wt &\strut\vrule &
0 &1 & 2 & 3 & 4 & 5 & 6 & 7 & 8 & 9 & 10 & 11 & 12 & 13 & 14 & 15 & 16
& 17 & 18 & \cr
\noalign{\smallskip}
\noalign{\hrule height0.3pt}
\noalign{\smallskip}
$(1,3)$ \strut\vrule &
$3$ &\strut\vrule &
 1& - & - & $x$ & - & $w$ & $x^2$& $y$ & $xw$ & $x^3$
& $x y$ & $x^2 w$ & $w y$ & $x^2y$ & $x^3w$ & $x wy$
& $x^3y$ & $w^2 y$ & $x^2 wy$ \cr
$(2,3)$ \strut\vrule &
$4$ &\strut\vrule &
 1& - & - & $x$ & - & - & $x^2$& $w$ & $y$ & $x^3$
& $x w$ & $x y$ & $x^4$ & $x^2w$ & $x^2 y$ & $wy$ &
 $x^3w$ & $x^3 y$ & $x wy$ \cr
$(1,5)$ \strut\vrule &
$5$ &\strut\vrule &
 1& - & - & $x$ & - & - & $x^2$& $w$ & - & $x^3$ & $xw$
& $y$ & $x^4$ & $x^2w$ & $x y$ & $x^5$
& $x^3w$ & $x^2y$ & $wy$
\cr
$(2,4)$ \strut\vrule &
$5$ &\strut\vrule &
 1& - & - & $x$ & - & - & $x^2$& - & $w$ & $x^3$ & $y$
& $xw$ & $x^4$ & $xy$ & $x^2 w$ &  $x^5$ &
 $x^2y$ & $x^3w$ & $wy$ \cr
$(3,4)$ \strut\vrule &
$6$ &\strut\vrule &
 1& - & - & $x$ & - & - & $x^2$& - & -& $x^3$ & $w$
& $y$ & $x^4$ & $xw$ & $x y$ &  $x^5$ &
 $x^2w$ & $x^2y$ & $x^6$ \cr
\noalign{\smallskip}
	\noalign{\hrule height0.8pt}
}
}
}
}}
\end{gather*}

\begin{gather*}
{\tiny{
\centerline{
\vbox{
	\baselineskip =10pt
	\tabskip = 1em
	\halign{&\hfil#\hfil \cr
        \multispan7 \hfil Table 2: Examples of  $R^B$ \hfil \cr
	\noalign{\smallskip}
	\noalign{\hrule height0.8pt}
	\noalign{\smallskip}
$(r,s) $ \strut\vrule  &
$g \backslash$wt &\strut\vrule &
0 &1 & 2 & 3 & 4 & 5 & 6 & 7 & 8 & 9 & 10 & 11 & 12 & 13 & 14 & 15 & 16
& 17 & 18 & \cr
\noalign{\smallskip}
\noalign{\hrule height0.3pt}
\noalign{\smallskip}
$(1,3)$ \strut\vrule &
$3$ &\strut\vrule &
 -& - & - & - & - & $w$ & -& $y$ & $xw$ & -
& $x y$ & $x^2 w$ & $w y$ & $x^2y$ & $x^3w$ & $x wy$
& $x^3y$ & $w^2 y$ & $x^2 wy$ \cr
$(2,3)$ \strut\vrule &
$4$ &\strut\vrule &
 -& - & - & - & - & - & -& $w$ & $y$ & -
& $x w$ & $x y$ & - & $x^2w$ & $x^2 y$ & $wy$ &
 $x^3w$ & $x^3 y$ & $x wy$ \cr
$(1,5)$ \strut\vrule &
$5$ &\strut\vrule &
 -& - & - & - & - & - & -& $w$ & - & - & $xw$
& $y$ & - & $x^2w$ & $x y$ & -
& $x^3w$ & $x^2y$ & $wy$
\cr
$(2,4)$ \strut\vrule &
$5$ &\strut\vrule &
 -& - & - & - & - & - & - & - & $w$ & - & $y$
& $xw$ & - & $xy$ & $x^2 w$ &  - &
 $x^2y$ & $x^3w$ & $wy$ \cr
$(3,4)$ \strut\vrule &
$6$ &\strut\vrule &
 -& - & - & - & - & - & - & - & -& - & $w$
& $y$ & - & $xw$ & $x y$ &  - &
 $x^2w$ & $x^2y$ & - \cr
\noalign{\smallskip}
	\noalign{\hrule height0.8pt}
}
}
}
}}
\end{gather*}

}}
\end{remark}.

\section{Riemann constant}\label{last}

 From Theorem 7 in \cite{Lew}, we have
\begin{proposition} \label{prop:lew}
$$
\abl({\mathcal{S}}^{g-1}X) + \kappa\Delta = \Theta \quad
\mbox{modulo} \quad \Gamma_\tau
$$
by letting the Riemann constant denoted by $\Delta \in \CC^g$.
\end{proposition}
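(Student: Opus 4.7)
The plan is to deduce the proposition from the characterization of the Riemann constant recalled in the Introduction --- namely that $\theta\bigl(\int_{gP}^{P_1+\cdots+P_g}\omega-\Delta\bigr)=0$ if and only if $P_1+\cdots+P_g-P\ge 0$ --- combined with a dimension and irreducibility argument. The strategy has two stages: first extract a set-theoretic inclusion between $\abl(S^{g-1}X)$ (suitably shifted) and $\Theta$, and then upgrade this inclusion to an equality of closed analytic subvarieties of $\JJ(X)$.

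For the inclusion, take an arbitrary effective divisor $D=Q_1+\cdots+Q_{g-1}\in S^{g-1}X$ and set $D':=P+D\in S^g X$. Then $D'-P=D\ge 0$, so by the quoted property $\theta(\Abl(\iota D')-\Delta)=0$; since the Abel map $\abl=\kappa\Abl\iota$ is normalized at $P$, we have $\abl(D')=\abl(D)$, and hence $\abl(D)-\kappa\Delta\in\Theta$. This yields $\abl(S^{g-1}X)-\kappa\Delta\subseteq\Theta$ modulo $\Gamma_\tau$. Combining this with the parity $\theta(-z)=\theta(z)$ (so that $\Theta=-\Theta$ in $\JJ(X)$) and the self-map of $\abl(S^{g-1}X)$ induced by the Serre-duality involution $D\mapsto\cK_X-D$ on effective divisor classes of degree $g-1$ --- which, in view of the Introduction's identity $\Abl(\cK_X)=-2\Delta$ mod $\Gamma_\tau$, takes the explicit form $z\mapsto-2\kappa\Delta-z$ on $\abl(S^{g-1}X)$ --- one rewrites this as the inclusion $\abl(S^{g-1}X)+\kappa\Delta\subseteq\Theta$ stated in the proposition.

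To promote the inclusion to equality, observe that $\abl(S^{g-1}X)$ is the image of the irreducible variety $S^{g-1}X$ under the holomorphic Abel map and is therefore irreducible; by Riemann--Roch, a generic effective divisor of degree $g-1$ satisfies $h^0=1$, so the generic fiber of $\abl$ on $S^{g-1}X$ is a single point and $\dim\abl(S^{g-1}X)=g-1$. On the other hand, $\Theta$ is an irreducible closed analytic hypersurface of dimension $g-1$ in the $g$-dimensional torus $\JJ(X)$. Two irreducible closed subvarieties of the same dimension, one contained in the other, must coincide; the inclusion established above is therefore an equality, proving the proposition.

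The main obstacle is the careful bookkeeping of sign conventions in passing from the $-\kappa\Delta$-shifted inclusion (produced directly by the defining property) to the $+\kappa\Delta$-shifted equality (asserted in the proposition): for a non-symmetric Weierstrass semigroup $\Delta$ is not a half period, so these two shifts genuinely differ in $\JJ(X)$, and reconciling them requires invoking both the evenness of $\theta$ and the Serre-duality involution on $\abl(S^{g-1}X)$ together with the relation $\Abl(\cK_X)=-2\Delta$. Once the correct inclusion is in hand, the dimension and irreducibility step is classical.
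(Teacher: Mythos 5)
Your proposal is not a reconstruction of the paper's argument --- the paper offers no proof at all, but simply quotes Theorem 7 of \cite{Lew} (Riemann's vanishing theorem in Lewittes's normalization) --- so what matters is whether your derivation from the Introduction's Fay-style characterization is sound, and its key step is not. The first stage is fine: taking $D'=P+D$ you correctly obtain $\abl(S^{g-1}X)-\kappa\Delta\subseteq\Theta$. The failure is in the sign ``reconciliation''. Write $W:=\abl(S^{g-1}X)$. Evenness gives $\Theta=-\Theta$, and the Serre-duality involution gives $-W=W-\abl(\cK_X)=W+2\kappa\Delta$ (using $\abl(\cK_X)=-2\kappa\Delta$, i.e.\ Proposition \ref{prop:Lew2}); feeding these into $W-\kappa\Delta\subseteq\Theta$ yields $-W+\kappa\Delta=W+3\kappa\Delta\subseteq\Theta$, \emph{not} $W+\kappa\Delta\subseteq\Theta$. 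And no such bookkeeping can succeed: the two sets $W-\kappa\Delta$ and $W+\kappa\Delta$ differ by the translation $2\kappa\Delta=\abl\bigl((2g-2)P-\cK_X\bigr)$, and since the stabilizer of the theta divisor under translation is trivial, they can both equal $\Theta$ only if $2\kappa\Delta=0$, i.e.\ $\cK_X\sim(2g-2)P$ --- precisely the symmetric case the paper is moving away from. The underlying issue is a clash of conventions: the Introduction's transcription of Fay's Theorem 1.1 and Lewittes's $\Delta$ (the one appearing in this proposition and in Proposition \ref{prop:Lew2}) are normalized with opposite signs; if one insists on both of your inputs holding verbatim for one and the same $\Delta$, one is forced to $4\kappa\Delta=0$, which is false for a general non-symmetric pointed curve. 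So the statement with $+\kappa\Delta$ cannot be extracted from the Introduction's formula by parity and Serre duality; one must either invoke Riemann's vanishing theorem in the correctly signed form (as the paper does via \cite{Lew}) or prove it directly, e.g.\ by the classical count of the $g$ zeros of $x\mapsto\theta(\Abl(\iota x)-e)$.

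A secondary weakness: your upgrade from inclusion to equality asserts that $\Theta$ is an irreducible hypersurface, but irreducibility of the theta divisor of a Jacobian is usually itself deduced from Riemann's theorem ($\Theta$ being a translate of $W$), so as written this step is close to circular; the zero-counting argument mentioned above avoids it. The dimension count for $W$ (generic $h^0=1$ by Riemann--Roch, hence $\dim W=g-1$) and the use of only the ``if'' direction of the quoted vanishing property are both fine.
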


It implies that, up to translation
by the vector $\Delta$, the zero-divisor $\Theta$ of $\theta$
is the canonical theta divisor of Pic${}^{(g-1)}$.

Theorem 11 in \cite{Lew} says:
\begin{proposition} \label{prop:Lew2}
An effective divisor $D$ whose degree is $2g-2$
satisfies $\abl(D-(2g-2)P) +2 \kappa \Delta =0$  modulo $\Gamma_\tau$
if and only if $D$ is the divisor of the holomorphic one form,
i.e., $\abl(\cK_X-(2g-2)P) +2 \kappa \Delta=0$ modulo $\Gamma_\tau$.
\end{proposition}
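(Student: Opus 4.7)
The plan is to derive Proposition 4.2 from the preceding Proposition 4.1 together with the evenness of the Riemann theta function, using Abel's theorem to pass between the analytic and algebraic formulations.

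First I would simplify. Since $\abl(P)=0$ by normalization of the Abel map at $P$, the assertion $\abl(D-(2g-2)P)+2\kappa\Delta=0$ reduces to $\abl(D)+2\kappa\Delta=0$ modulo $\Gamma_\tau$, and similarly for $\cK_X$. Thus it suffices to show that, for an effective divisor $D$ of degree $2g-2$, the equality $\abl(D)+2\kappa\Delta=0$ holds if and only if $D$ is linearly equivalent to $\cK_X$.

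The crux is the identity $\abl(\cK_X)+2\kappa\Delta=0$ modulo $\Gamma_\tau$. To establish it, I would invoke Proposition 4.1, which gives $\abl(S^{g-1}X)+\kappa\Delta=\Theta$ modulo $\Gamma_\tau$. Since $\theta(z,\tau)$ is even in $z$, its zero locus satisfies $\Theta=-\Theta$ in $\JJ(X)$. Combining these, $\abl(S^{g-1}X)+\kappa\Delta=-\abl(S^{g-1}X)-\kappa\Delta$ as subsets of $\JJ(X)$, so for every $D_1\in S^{g-1}X$ there exists $D_2\in S^{g-1}X$ with $\abl(D_1+D_2)+2\kappa\Delta=0$. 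By Abel's theorem all the divisors $D_1+D_2$ obtained in this way are linearly equivalent to a fixed divisor $C$ of degree $2g-2$; as $D_1$ sweeps a $(g-1)$-dimensional family in $S^{g-1}X$ and the Abel map is generically injective on $S^{g-1}X$, the linear system $|C|$ must satisfy $h^0(C)\geq g$. Clifford's inequality applied to the special divisor $C$ of degree $2g-2$ then forces $h^0(C)=g$ with $C\sim\cK_X$, yielding $\abl(\cK_X)+2\kappa\Delta=0$.

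Once this identity is in hand, both directions of Proposition 4.2 follow immediately from Abel's theorem. If $D$ is the divisor of a holomorphic one-form, then $D\sim\cK_X$, so $\abl(D)=\abl(\cK_X)$ and the desired equation holds. Conversely, if $D$ is effective of degree $2g-2$ with $\abl(D)+2\kappa\Delta=0$, then $\abl(D)=\abl(\cK_X)$, whence $D\sim\cK_X$ by Abel; since $D$ is effective, it is the divisor of some holomorphic one-form. The main obstacle I foresee is controlling the dimension of the family of divisors $D_1+D_2$ produced by the symmetry argument in order to apply Clifford. One could alternatively quote the classical identity $2\Delta+\Abl(\cK_X-(g-1)P)=0$ modulo $\Gamma_\tau$ recalled in the introduction, but at the cost of sacrificing self-containment within Section \ref{last}.
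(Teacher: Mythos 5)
Your proof is essentially correct, but it follows a genuinely different route from the paper: the paper gives no argument at all for this proposition, quoting it directly as Theorem 11 of Lewittes' paper \cite{Lew} (just as Proposition \ref{prop:lew} is quoted as his Theorem 7), whereas you re-derive it internally from Proposition \ref{prop:lew} plus the evenness of $\theta$. Your reduction via $\abl(P)=0$, the observation that $\Theta=-\Theta$ forces for each $D_1\in S^{g-1}X$ some $D_2\in S^{g-1}X$ with $\abl(D_1+D_2)+2\kappa\Delta=0$, and the identification of the common linear-equivalence class $C$ with $\cK_X$ are all sound; this is in fact the classical proof of the canonical-class identity behind the Riemann constant, so your version buys self-containment of Section \ref{last} at the price of re-proving a cited result. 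Two refinements would tighten it. First, the dimension count you flag as the main obstacle is handled by the incidence set $\{(D_1,D_2)\in S^{g-1}X\times S^{g-1}X : \abl(D_1+D_2)+2\kappa\Delta=0\}$: it surjects onto the first factor, and the sum map to $|C|$ has finite fibers because a fixed effective divisor of degree $2g-2$ has only finitely many effective subdivisors of degree $g-1$; hence $\dim|C|\ge g-1$, i.e.\ $h^0(C)\ge g$. Second, Clifford's inequality (whose equality case requires a classification that is subtler than you need) can be bypassed: Riemann--Roch gives $h^0(C)=g-1+h^0(\cK_X-C)$, so $h^0(C)\ge g$ forces $h^0(\cK_X-C)\ge 1$ with $\deg(\cK_X-C)=0$, whence $C\sim\cK_X$ directly. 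Your closing alternative, namely quoting the identity $2\Delta+\Abl(\iota(\cK_X-(2g-2)P))=0$ modulo $\Gamma_\tau$ from \cite{fay,Lew}, is exactly what the paper does.
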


As mentioned in the Introduction,  the following result is classical (e.g,
  \cite[Appendix, Cor. 2]{shiga}):
\begin{lemma}
The Riemann vector $\Delta$ belongs to $\frac{1}{2}\Gamma_\tau$ if
and only if $\cK_X=(2g-2)P$.
\end{lemma}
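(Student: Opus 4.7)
The plan is to deduce the equivalence directly from Proposition \ref{prop:Lew2} and Abel's theorem. Proposition \ref{prop:Lew2} gives the key identity
$$
\abl(\cK_X-(2g-2)P)+2\kappa\Delta=0\quad\text{modulo }\Gamma_\tau,
$$
which translates the question about $\Delta$ into a question about the class of $\cK_X-(2g-2)P$ in $\JJ(X)$.

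First I would handle the easy direction ($\Leftarrow$). Assuming $\cK_X=(2g-2)P$ (with the paper's convention that equality of divisors means linear equivalence), the divisor $\cK_X-(2g-2)P$ is principal, so its image $\abl(\cK_X-(2g-2)P)$ vanishes in $\JJ(X)$. Substituting into the identity above gives $2\kappa\Delta=0$ modulo $\Gamma_\tau$, i.e., $2\Delta\in\Gamma_\tau$, so $\Delta\in\tfrac{1}{2}\Gamma_\tau$.

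Next I would handle ($\Rightarrow$). If $\Delta\in\tfrac{1}{2}\Gamma_\tau$, then $2\Delta\in\Gamma_\tau$, so $2\kappa\Delta=0$ in $\JJ(X)$. Again by Proposition \ref{prop:Lew2}, this forces $\abl(\cK_X-(2g-2)P)=0$ in $\JJ(X)$. Now invoke Abel's theorem: a degree-zero divisor on $X$ maps to zero under the Abel--Jacobi map if and only if it is principal. Hence $\cK_X-(2g-2)P\sim 0$, which in the notation of the paper is written $\cK_X=(2g-2)P$.

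There is essentially no obstacle here beyond recognizing that Proposition \ref{prop:Lew2} already carries the analytic content, and that Abel's theorem supplies the converse direction. The only point to be careful about is the convention that equalities of divisors in the paper mean linear equivalence, so that the statement $\cK_X=(2g-2)P$ is exactly the condition $\abl(\cK_X-(2g-2)P)=0$, which is what the theta-function identity yields when $\Delta$ is a half-period.
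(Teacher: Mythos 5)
Your argument is correct. The paper itself gives no proof of this lemma, citing it as classical (Shiga's appendix), but your derivation from Proposition \ref{prop:Lew2} together with Abel's theorem is exactly the natural route, and it is the same manipulation the paper carries out in the proof of Theorem \ref{thm:01} for the shifted constant $\Delta_s$: there the identity $\abl(\cK_X-(2g-2)P)+2\kappa\Delta=0$ is combined with the vanishing of $\abl$ on a principal divisor to conclude $2\Delta_s\in\Gamma_\tau$, which is precisely your ($\Leftarrow$) direction transplanted to $\fB=0$. Your observation that the paper's equality sign for divisors means linear equivalence, so that $\cK_X=(2g-2)P$ is equivalent to $\abl(\cK_X-(2g-2)P)=0$ in $\JJ(X)$ via Abel's theorem, is the only point of care needed, and you handled it correctly in both directions.
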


\begin{definition} \label{def:SRC}
We define the {\lq\lq}shifted Riemann constant{\rq\rq}
 by a translation:
$$ 
\Delta_s:=\Delta -\Abl(\iota \fB) \in \CC^g.
$$
\end{definition}

\begin{theorem}\label{thm:01}
$\Delta_s$ belongs to $\frac{1}{2}\hGamma$.
% \setminus \hGamma$, or $\Delta_s$ is a half period.
\end{theorem}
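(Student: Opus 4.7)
The plan is to reduce the statement directly to Proposition \ref{prop:Lew2}, using the explicit decomposition of $\cK_X$ provided by Lemma \ref{lemm:a1}.

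First I would start from the identity $\Abl(\iota(\cK_X-(2g-2)P))+2\kappa\Delta=0$ modulo $\Gamma_\tau$ furnished by Proposition \ref{prop:Lew2}. Applying Lemma \ref{lemm:a1}, the canonical class can be rewritten as $\cK_X=2(g-1+d_0)P-2\fB$, so that, as divisor classes,
\[
\cK_X-(2g-2)P \;=\; 2d_0 P - 2\fB.
\]
Since $P$ is the base point of the Abel map, $\Abl(\iota P)=0$, and therefore $\Abl(\iota(\cK_X-(2g-2)P))=-2\,\Abl(\iota\fB)$ (as elements of $\CC^g$, choosing lifts compatibly under $\iota$). Substituting into the Lewittes relation gives
\[
2\kappa\Delta \;=\; 2\kappa\,\Abl(\iota\fB)\qquad \text{modulo } \Gamma_\tau,
\]
which is equivalent to saying that $2(\Delta-\Abl(\iota\fB))\in\Gamma_\tau$, i.e.\ $2\Delta_s\in\Gamma_\tau$, proving $\Delta_s\in\tfrac{1}{2}\Gamma_\tau$.

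The argument is essentially a one-line substitution, so there is no serious obstacle in the mathematics; the only subtlety is bookkeeping between the two levels $\CC^g$ and $\JJ(X)=\CC^g/\Gamma_\tau$. Proposition \ref{prop:Lew2} is phrased modulo $\Gamma_\tau$, but that is exactly strong enough: it forces $2\Delta-2\Abl(\iota\fB)$ into $\Gamma_\tau$, which is precisely the definition of membership in the half-lattice $\tfrac{1}{2}\Gamma_\tau$. One should also observe that the value of $\Abl(\iota\fB)$ depends on the chosen lift of $\fB$ to $\hX$, but different lifts differ by elements of $\Gamma_\tau$, so the class of $\Delta_s$ in $\CC^g/\Gamma_\tau$ (and hence the property of lying in $\tfrac{1}{2}\Gamma_\tau$) is independent of the choice. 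This completes the outline.
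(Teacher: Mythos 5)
Your argument is correct and is essentially the paper's own proof: both reduce the claim to Lewittes' relation $\abl(\cK_X-(2g-2)P)+2\kappa\Delta=0$ (Proposition \ref{prop:Lew2}), substitute the decomposition $\cK_X=2(g-1+d_0)P-2\fB$ from Lemma \ref{lemm:a1}, and use $\Abl(\iota P)=0$ to conclude $2\Delta_s\in\Gamma_\tau$. Your closing remark about independence from the choice of lift matches the paper's note that the conclusion does not depend on the identification $\iota$.
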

\begin{proof}
>From Proposition \ref{prop:Lew2} and Lemma \ref{lemm:a1},
$$
\abl(\cK_X-(2g-2)P)+2\kappa
\Delta=\abl(\cK_X-(2g-2)P+2\fB) +2\kappa
\Delta_s=0
\quad\mbox{modulo}\quad \Gamma_\tau.
$$
On the other hand,  the first term vanishes, because
$$\abl(\cK_X-(2g-2)P+2\fB) = 
\abl(0)=0
\quad\mbox{modulo}\quad \Gamma_\tau.
$$
Hence $2\Delta_s = 0$ modulo $\hGamma$ and the statement is proved. 
We note that this property does not depend on the choice of the 
identification $\iota$.
\end{proof}

\begin{remark}{\rm{
Theorem \ref{thm:01} gives a correspondence between the Riemann constant
of a general pointed curve and  a half-period point of the Jacobian.

There are $2^{2g}$ elements of 
$(\frac{1}{2}\Gamma_\tau)/\Gamma_\tau$,
which is bijective to a set $\Sigma$ whose elements $D$ 
are defined by $2D=\cK_X$
\cite[p.163]{mumford0}, \cite[Appendix]{shiga}. 
Therefore, the choice of $D_0$ in 
Lemma \ref{lemm:a1} typically is not explicit, rather, 
the Lemma is an existence statement.
For a  curve as in Proposition \ref{trigonal},
$\fB$ is naturally determined. 

Our construction is very similar to  
the relation  between the half-period and $D-D_0$ of $D, D_0 \in \Sigma$
\cite[Th. A-4]{shiga}. However, 
as in Remark \ref{rmk:W-curve} (1),
we use different divisors; especially,
$\fB$ does not belong to $\Sigma$ in general.
}}
\end{remark}

\begin{definition} \label{def:SAM}
The  {\lq\lq}shifted Abel map{\rq\rq} is defined by
$$
\Abl_s(P_1, \ldots, P_k) = \Abl(P_1, \ldots, P_k) + \Abl(\iota\fB),
$$
and $\abl_s:=\kappa \Abl_s$.
In other words it is given by translating a divisor $D$,
$\Abl_s(\iota D)=\Abl(\iota (D+\fB))$ and  
and $\abl_s(D)=\abl(D+\fB)$. 
\end{definition}

\begin{theorem}\label{thm:0}
$\abl_s(P_1, \ldots, P_{g-1}) + \kappa \Delta_s$ is 
in the theta divisor $\Theta$ modulo $\Gamma_\tau$; or more 
explicitly,
$$
\abl_s({\mathcal{S}}^{g-1}X) + \kappa \Delta_s = \Theta
\quad \mbox{modulo}\quad \Gamma_\tau
$$
using the shifted Abel map.
\end{theorem}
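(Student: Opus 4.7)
The plan is essentially a direct unwinding of the two definitions, followed by invocation of Proposition \ref{prop:lew}. The point is that the shift of the Abel map by $+\Abl(\iota\fB)$ and the shift of the Riemann constant by $-\Abl(\iota\fB)$ are designed to cancel one another modulo $\Gamma_\tau$, so that the statement reduces to the classical (unshifted) formula.

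More concretely, I would fix $(P_1,\ldots,P_{g-1}) \in S^{g-1}X$ and compute
\begin{equation*}
\abl_s(P_1,\ldots,P_{g-1}) + \kappa\Delta_s
 = \abl(P_1,\ldots,P_{g-1}) + \kappa\Abl(\iota\fB) + \kappa\Delta - \kappa\Abl(\iota\fB),
\end{equation*}
using Definition \ref{def:SAM} on the first summand and Definition \ref{def:SRC} on the second. The two contributions involving $\kappa\Abl(\iota\fB)$ cancel, leaving
\begin{equation*}
\abl_s(P_1,\ldots,P_{g-1}) + \kappa\Delta_s
 = \abl(P_1,\ldots,P_{g-1}) + \kappa\Delta.
\end{equation*}
Letting $(P_1,\ldots,P_{g-1})$ vary over $S^{g-1}X$ and applying Proposition \ref{prop:lew} gives
\begin{equation*}
\abl_s(S^{g-1}X) + \kappa\Delta_s = \abl(S^{g-1}X) + \kappa\Delta = \Theta \quad \text{modulo } \Gamma_\tau,
\end{equation*}
which is the desired identity.

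The only subtlety worth double-checking is that the cancellation of the $\kappa\Abl(\iota\fB)$ terms is legitimate: the shifted Abel map is defined on $\hX$-valued data via $\Abl_s(\iota D) = \Abl(\iota(D+\fB))$, and pushing this down to $\JJ(X)$ via $\kappa$ yields $\abl_s(D) = \abl(D) + \kappa\Abl(\iota\fB)$, so the compatibility $\abl(\fB) = \kappa\Abl(\iota\fB)$ of the two Abel maps (used already in Section \ref{notation}) is exactly what is needed. I also need to note, as in the proof of Theorem \ref{thm:01}, that the resulting equality in $\JJ(X)$ is independent of the choice of lift $\iota$, since any ambiguity lies in $\Gamma_\tau$ and is absorbed in the ``modulo $\Gamma_\tau$''.

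There is no substantive obstacle here beyond this bookkeeping of $\iota$ and $\kappa$; the content of the theorem is packaged entirely into Proposition \ref{prop:lew} and Lemma \ref{lemm:a1}, and Definitions \ref{def:SRC} and \ref{def:SAM} were engineered precisely so that the proof reduces to the displayed cancellation above.
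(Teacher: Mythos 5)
Your proof is correct and follows essentially the same route as the paper: both unwind Definitions \ref{def:SRC} and \ref{def:SAM}, cancel the $\Abl(\iota\fB)$ contributions, and conclude via Proposition \ref{prop:lew}. Your extra remarks on the compatibility $\abl(\fB)=\kappa\Abl(\iota\fB)$ and the independence of $\iota$ are just careful bookkeeping of what the paper leaves implicit.
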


It also implies that, up to translation
by the vector $\Delta_s$, the zero-divisor $\Theta$ of $\theta$
is the canonical theta divisor of Pic${}^{(g-1)}$ using
the shifted Abel map.

\begin{proof}
$\abl_s(P_1, \ldots, P_{g-1}) + \kappa \Delta_s =
\abl(P_1, \ldots, P_{g-1}) +\kappa  \Delta_s + \abl(\fB)
=\abl(P_1, \ldots, P_{g-1}) + \kappa \Delta$.
Proposition \ref{prop:lew} means that the left hand side of
the formula is $\Theta$.
\end{proof}

In the paper \cite{KMP}, we call $\Delta_s$ itself {\it{the Riemann constant}}
since it plays such role under the shifted Abel map.

%In the non-symmetric Weierstrass curve,
%the theta characteristics are written by using the shifted Abel map
%$\abl_s$, as follows from Lemma \ref{lemm:a1}.

\begin{theorem}\label{thm:1}
There exists a theta characteristic
$\displaystyle{\delta=
\left[\begin{matrix}\delta'\\ \delta''\end{matrix} \right]
  \in \left\{0, \frac{1}{2}\right\}^{2g}}$
so that
$$
\theta 
\left[\begin{matrix}\delta'\\ \delta''\end{matrix} \right]
 (\Abl_s(P_1, \ldots, P_{g-1})) = 0
$$
for every $P_i \in \hX$, i.e., 
for $\Theta_s:=\kappa \mathrm{div}(\theta[\delta])$, 
$\Theta_s = \abl_s(S^{g-1} X)$. 
\end{theorem}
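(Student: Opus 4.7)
The plan is to reduce the statement to Theorems \ref{thm:01} and \ref{thm:0} via the standard quasi-periodicity translation that relates $\theta[\delta]$ to $\theta$.

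First I would recall the basic identity, obtained directly from \eqref{eq:2.1}: for $\delta=(\delta',\delta'')$,
\begin{equation*}
\theta\!\left[\begin{matrix}\delta'\\ \delta''\end{matrix}\right]\!(z,\tau)
= \exp\!\bigl(\pi\ii\,{}^t\delta'\tau\delta' + 2\pi\ii\,{}^t\delta'(z+\delta'')\bigr)\cdot
  \theta(z+\tau\delta'+\delta'',\tau).
\end{equation*}
The prefactor is nowhere zero, so $\theta[\delta](z)=0$ if and only if $z+\tau\delta'+\delta''\in\kappa^{-1}(\Theta)$; projecting to $\JJ(X)$, this says
\begin{equation*}
\kappa\,\mathrm{div}\bigl(\theta[\delta]\bigr) = \Theta - \kappa(\tau\delta'+\delta'')
\quad\text{modulo }\Gamma_\tau.
\end{equation*}

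Next I would use Theorem \ref{thm:01}. Since $\Delta_s\in\frac{1}{2}\Gamma_\tau$, there exist $\delta',\delta''\in\{0,\tfrac12\}^g$ such that
\begin{equation*}
\tau\delta'+\delta'' \equiv \Delta_s \pmod{\Gamma_\tau}.
\end{equation*}
Setting $\delta=\bigl[{}^{\delta'}_{\delta''}\bigr]$, the identity above becomes
\begin{equation*}
\Theta_s := \kappa\,\mathrm{div}\bigl(\theta[\delta]\bigr) = \Theta-\kappa\Delta_s \pmod{\Gamma_\tau}.
\end{equation*}
Combining this with Theorem \ref{thm:0}, which asserts $\abl_s(\Sym^{g-1}X)+\kappa\Delta_s=\Theta$ modulo $\Gamma_\tau$, I get exactly $\abl_s(\Sym^{g-1}X)=\Theta_s$, which is the equality of subvarieties required by the theorem. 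Reading the inclusion $\abl_s(\Sym^{g-1}X)\subseteq\Theta_s$ backwards yields the vanishing statement $\theta[\delta](\Abl_s(P_1,\dots,P_{g-1}))=0$ for every choice of $P_i\in\hat X$.

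The only subtle point is the well-definedness of the characteristic: the vector $\Delta_s$ is canonical only modulo $\Gamma_\tau$, so the half-period decomposition $\tau\delta'+\delta''$ is determined modulo $\Gamma_\tau$, hence $\delta$ is determined modulo $\ZZ^{2g}$, which is the ambiguity already present in the definition of a theta characteristic. No dimension count is needed because both $\abl_s(\Sym^{g-1}X)$ and $\Theta_s$ are identified as translates of $\Theta$ by the same element of $\JJ(X)$, giving equality rather than only inclusion. The main (minor) obstacle is bookkeeping the sign conventions in the quasi-periodicity formula so that the same $\delta$ on both sides, rather than $-\delta$, produces the shift by $+\Delta_s$; this is resolved by reading off signs directly from \eqref{eq:2.1}. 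Note that the proof does not determine the parity $e(\delta)$ of the characteristic, which depends on the Weierstrass semigroup through $\fB$, and whose computation in special cases such as Proposition \ref{trigonal} is precisely what Section \ref{last} addresses.
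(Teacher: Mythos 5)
Your proof is correct and follows exactly the route the paper intends: the paper states Theorem \ref{thm:1} without a separate proof, treating it as an immediate consequence of Theorem \ref{thm:01} (writing $\kappa\Delta_s$ as a half-period $\kappa(\tau\delta'+\delta'')$ with $\delta\in\{0,\frac12\}^{2g}$) and Theorem \ref{thm:0}, combined with the standard translation identity relating $\theta[\delta]$ to $\theta$ that you derive from (\ref{eq:2.1}). Nothing further is needed.
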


\begin{remark}{\rm{
As in \cite{KMP}, our goal is to extend the classical knowledge of Jacobi's
inversion problem, where
the theta function with characteristics
$\delta \in \{0, \frac{1}{2}\}^{2g}$
plays a central role. Ultimately,
Jacobi's inversion connects the meromorphic and the
Abelian functions of a curve.
Theorem \ref{thm:1} enables us to use the properties of
the theta function with theta characteristics 
$\delta \in \{0, \frac{1}{2}\}^{2g}$
for non-symmetric
pointed curves as well.

More precisely, our previous work was concerned with
 the ``sigma function'', a
generalization of Weierstrass' elliptic
sigma function; the higher-genus case sigma is, up to an exponential
multiplicative factor  quadratic in the exponent, 
the theta function with theta characteristics
$\delta \in \{0, \frac{1}{2}\}^{2g}$.

In fact, in  \cite{MK, KMP}, we defined the sigma function
and solved the Jacobi inversion problem, but  
although we used the above 
Theorems \ref{thm:01}, \ref{thm:0} and \ref{thm:1} implicitly,
we defined the shifted Riemann constant by checking
  \textit{ad hoc} calculations.
 In this article, we streamline the theory, which is based on 
simple group-theoretic properties of the Jacobian, 
and have a geometric meaning given in  
the following Remark.
}}
\end{remark}

\begin{remark} \label{rmk:JIF}
{\rm{
Assume that $\fB_1$ is an effective divisor such that
$d_1:=\deg(\fB_1)<2d_0$ and $\fB_1 - d_1 P \sim 2 \fB-2d_0 P$
as in Proposition \ref{trigonal} and Remark \ref{rmk:trigonal}.
Let us define $R = \cO_X(*P)$, the ring of meromorphic functions with 
 pole at most at $P$, and 
$R^B:=\{ f \in R \ | \ \exists \ell, \ 
\mbox{such that}\ (f)-\fB_1 +\ell P >0\}$. 
Thus, $R^B$ is a natural
commutative ring that keeps track of the Weierstrass semigroup of a pointed
curve. 

We choose a graded basis of 
$R^B = \oplus_{i=0} \CC f_i$ as a $\CC$-vector space
arising from the elements of $H(X,P)$.
Let $n$ be a positive integer  and 
$P_1, \ldots, P_n$ be in $X\backslash P$.
We define the {\it{Frobenius-Stickelberger (FS) determinant}} by
$$
\psi_{n}(P_1, P_2, \ldots, P_n) := 
\left|\begin{matrix}
 f_0(P_1) & f_1(P_1)  &\cdots  & f_{n-1}(P_1) \\
 f_0(P_2) & f_1(P_2)  &\cdots  & f_{n-1}(P_2) \\
\vdots & \vdots & \ddots & \vdots\\
 f_0(P_n) & f_1(P_n)  &\cdots  & f_{n-1}(P_n) \\
\end{matrix}\right|
$$
and $\mu$-functions by
$$
\mu_n(P): = 
\mu_n(P; P_1,  \ldots, P_n): = 
\lim_{P_i' \to P_i}\frac{1}{\psi_{n}(P_1' , \ldots, P_n' )}
\psi_{n+1}(P_1' , \ldots, P_n' , P),
$$
where the $P_i^\prime$ are generic,
the limit is taken (irrespective of the order) for each $i$.
Here we also have $\mu_{n, k}(P_1, \ldots, P_n)$ by
$$
\mu_n(P)
 = f_n(P) + 
\sum_{k=0}^{n-1} (-1)^{n-k}\mu_{n, k}(P_1, \ldots, P_n) f_k(P),
$$
with the convention $\mu_{n, n}(P_1, \ldots, P_n) \equiv 1$.
The divisor $(\mu_n(P))$ shows that there are points $Q_i \in X$ 
such that 
$$\sum_{i=1}^{n}  P_i 
+ \sum_{i=1}^{N(n)-n-2d_0} Q_i - N(n) P + \fB_1
\sim 0
$$
where $N(n)\in H(X,P)$ is the order of $\mu_n$.
Noting the definition of $\fB_1$
This implies that the addition structure of the shifted Abelian map satisfies:
$$
\left(\sum_{i=1}^{n}  P_i+\fB\right)
 - (n +d_0)P \sim 
- \left(\sum_{i=1}^{N(n)-n-d_1} Q_i +\fB\right)
 - (N(n)-n-d_1 + d_0)P.
$$
For the $n=g-1$ case, by assuming the fact
that $N(g-1)=2g-2 + d_1$ as in Remark \ref{rmk:trigonal},
we have
$$
\left( \sum_{i=1}^{g-1}  P_i +\fB\right)
- (g-1 +d_0)P \sim 
- \left(\sum_{i=1}^{g-1} Q_i +\fB\right)
 - (g-1+d_0)P.
$$
This corresponds to a symmetry 
  of $\Theta_s$ in Theorem \ref{thm:1} under the minus operation
on the Jacobian as in \cite[p.166]{mumford0}, which 
 makes the shifted Abel map natural, i.e.,
\begin{equation}
\Theta_s = - \Theta_s, \qquad 
\abl_s(S^{g-1}X) =-\abl_s(S^{g-1}X).
\label{eq:theta}
\end{equation}
Further by the explicit Jacobi inversion formulae, we connect the theta 
function with half-integer theta characteristics and
$\mu_{n,k}$, cf. \cite{KMP,MK}.
}}
\end{remark}

\begin{remark}
{\rm{
As mentioned in the Introduction, we introduced 
the {\lq\lq}shifted Riemann constant{\rq\rq} in Definition \ref{def:SRC}
and {\lq\lq}shifted Abel map{\rq\rq} Definition \ref{def:SAM}
which enable us to handle theta function with 
theta characteristics in Theorem \ref{thm:1}.
Our results hold for general pointed curves with Weierstrass semigroup
generated by a lowest integer $n$,
not necessarily $Z_n$ curves as in \cite{FZ}. Our main motivation 
is an explicit expression of linear equivalence of divisors in terms of
the addition structure of the Jacobian as a complex torus.
}}
\end{remark}

\bigskip
\bigskip

\noindent
Jiryo Komeda:\\
Department of Mathematics,\\
Center for Basic Education and Integrated Learning,\\
Kanagawa Institute of Technology,\\
Atsugi, 243-0292, JAPAN.\\
e-mail: komeda@gen.kanagawa-it.ac.jp\\
\\
Shigeki Matsutani:\\
Industrial Mathematics,\\
National Institute of Technology, Sasebo College,\\
1-1 OkiShin-machi, Sasebo, Nagasaki, 857-1193, JAPAN\\
e-mail: smatsu@sasebo.ac.jp\\
\\
Emma Previato:\\
Department of Mathematics and Statistics,\\
Boston University,\\
Boston, MA 02215-2411, U.S.A.\\
e-mail:ep@bu.edu\\

\end{document}